\hfill \footnotesize {\rm M. Bavand Savadkouhi, M. Eshaghi Gordji and N. Ghobadipour} \hfill
\hfill \footnotesize {\rm Homomorphisms and derivations in
 $C^*$-ternary algebras
}  \hfill$~$}
\begin{document}
\thispagestyle{empty}
 \setcounter{page}{1}

\begin{center}
{\large\bf Stability of homomorphisms and derivations in
$C^*$-ternary algebras

\vskip.20in

{\bf M. Bavand Savadkouhi, M. Eshaghi Gordji and N. Ghobadipour  } \\[2mm]

{\footnotesize Department of Mathematics,
Semnan University,\\ P. O. Box 35195-363, Semnan, Iran\\
[-1mm] E-mail: {\tt bavand.m@gmail.com , madjid.eshaghi@gmail.com
\\ghobadipour.n@gmail.com}}}
\end{center}
\vskip 5mm \noindent{\footnotesize{\bf Abstract.} In this paper,
we investigate homomorphisms between $C^*$-ternary algebras and
derivations on $C^*$-ternary algebras, associated with the
following functional equation
$$f(\frac{x_2-x_1}{3})+f(\frac{x_1-3x_3}{3})+f(\frac{3x_1+3x_3-x_2}{3})=f(x_1).$$
Moreover, we prove the generalized Hyers-Ulam -Rassias  stability
of homomorphisms in $C^*$-ternary algebras and of derivations on
$C^*$-ternary algebras.
 }

\vskip.10in
 \footnotetext { 2000 Mathematics Subject Classification: 46LXX, 39B82,
 39B52, 39B72, 46K70.}
 \footnotetext { Keywords: Superstability; Generalized Hyers-Ulam -Rassias stability; Functional
 equation; $C^*$-ternary;
  $C^*$-ternary derivation; $C^*$-ternary homomorphism; Generalized Hyers-Ulam -Rassias stability}

  \newtheorem{df}{Definition}[section]
  \newtheorem{rk}[df]{Remark}
   \newtheorem{lem}[df]{Lemma}
   \newtheorem{thm}[df]{Theorem}
   \newtheorem{pro}[df]{Proposition}
   \newtheorem{cor}[df]{Corollary}
   \newtheorem{ex}[df]{Example}

 \setcounter{section}{0}
 \numberwithin{equation}{section}

\vskip .2in
\begin{center}
\section{Introduction}
\end{center}
Ternary algebraic operations were considered in the 19 th century by
several mathematicians and physicists such as Cayley [9] who
introduced the notions the of cubic matrix, which in turn was
generalized by Kapranov at el.[23]. The simplest example of such
nontrivial ternary operation is given by the following composition
rule:
$$\{a,b,c\}_{ijk}=\sum_{l,m,n}a_{nil} b_{ljm} c_{mkn}~.\hspace{.75 cm}(i,j,k,...=1,2,...,\Bbb {N})$$
Ternary structures and their generalization, the so-called n-ary
structures, raise certain hopes in view of their applications in
physics. Some significant physical applications are as follows
(see Refs.[24] and [25]):\\ (i) The algebra of "nonions" generated
by
two matrices,\\
$\left(%
\begin{array}{ccc}
  0 & 1 & 0 \\
  0 & 0 & 1 \\
  1 & 0 & 0 \\
\end{array}%
\right)$
and
$\left(%
\begin{array}{ccc}
  0 & 1 & 0 \\
  0 & 0 & \omega \\
  \omega^2 & 0 & 0 \\
\end{array}%
\right)~, \hspace{.75 cm} (\omega=e^{\frac{2\Pi i}{3}}) $\\
was introduced by Sylvester as a ternary analog of Hamilton's
quaternions (cf. Ref. [1]).\\ (ii) A natural ternary composition
of $4$-vectors in the four-dimensional Minkowskian space time
$M_4$ can be defined as an example of a ternary operation:
$$(X,Y,Z)\longmapsto U(X,Y,Z) \in M_4~,$$
with the resulting 4-vector $U^{\mu}$ defined via its components
in a given coordinate system as follows:
$$U^{\mu}(X,Y,Z)=g^{\mu\sigma}\eta_{\sigma\nu\lambda\rho}X^{\nu}Y^{\lambda}Z^{\rho}~, \hspace{0.75 cm} \mu,\nu, . . . =
0,1,2,3,$$ where $g^{\mu\sigma}$ is the metric tensor and
$\eta_{\sigma\nu\lambda\rho}$ is the canonical volume element of
$M_4$.~[25]\\ (iii) The quark model inspired a particular brand of
ternary algebraic systems. The "Nambu mechanics" is based on such
structures (see Refs. [11] and [52]). Quarks apparently couple by
packs of 3.\\
There are also some applications, although still hypothetical, in
the fractional quantum Hall effect, nonstandard statistics,
supersymmetric theory, Yang–Baxter equation, etc., cf. Refs. [1],
[25], and [54]. Following the terminology of Ref. [12], a nonempty
set G with a ternary operation $[.,.,.]:G^3\longrightarrow G$ is
called a ternary groupoid and is denoted by $(G,[.,.,.]).$ The
ternary groupoid $(G,[.,.,.])$ is called commutative if
$[x_1,x_2,x_3]=[x_{\sigma(1)},x_{\sigma(2)},x_{\sigma(3)}]$ for all
$x_1,x_2,x_3 \in G$ and all permutations $\sigma$ of $\{1,2,3\}.$ If
a binary operation $\circ$ is defined on G such that $[x,y,z]=(x
\circ y)\circ z$ for all $x,y,z \in G$, then we say that $[.,.,.]$
is derived from $\circ$. We say that $(G,[.,.,.])$ is a ternary
semigroup if the operation $[.,.,.]$ is associative, i.e., if
$[[x,y,z],u,v]=[x,[y,z,u],v]=[x,y,[z,u,v]]$ holds for all $x,y,z,u,v
\in G$ (see Ref. [8]).\\ As it is extensively discussed in [50], the
full description of a physical system $ \Bbb S$ implies the
knowledge of three basis ingredients: the set of the observables,
the set of the states and the dynamics that describes the time
evolution of the system by means of the time dependence of the
expectation value of a given observable on a given statue.
Originally the set of the observable was considered to be a
$C^*-$algebra [17]. In many applications, however, this was shown
not to be the must convenient choice and the $C^*-$algebra was
replaced by a von Neumann algebra, because the role of the
representation terns out to be crucial mainly when long range
interactions are involved (see [6] and references therein). Here we
used a different algebraic structure. A $C^*$-ternary algebra is a
complex Banach space A, equipped with a ternary product
$(x,y,z)\rightarrowtail[x,y,z]$ of $A^3$ into $A,$ which is $\Bbb
C$-linear in the outer variables, conjugate $\Bbb C$-linear in the
middle variable, and associative in the sense that
$[x,y,[z,w,v]]=[x,[w,z,y],v]=[[x,y,z],w,v],$ and satisfies
$\|[x,y,z]\|\leq\|x\|.\|y\|.\|z\|$ and $\|[x,x,x]\|=\|x\|^3$
(see[27]). If a $C^*$-ternary algebra $(A,[.,.,.])$ has an identity,
i.e., an element $e\in A$ such that $x=[x,e,e]=[e,e,x]$ for all
$x\in A,$ then it is routine to verify that $A,$ endowed with
$xoy:=[x,e,y]$ and $x^*:=[e,x,e],$ is a unital $C^*$- algebra.
Conversely, if $(A,o)$ is a unital $C^*$- algebra, then
$[x,y,z]:=xoy^*oz$ makes $A$ into a $C^*$-ternary algebra. A $\Bbb
C$-linear mapping $H:A \to B$ is called a $C^*$-ternary algebra
homomorphism if $$H([x,y,z])=[H(x),H(y),H(z)]$$ for all $x,y,z\in
A.$  A $\Bbb C$-linear mapping $\delta:A \to A$ is called a
$C^*$-ternary algebra derivation if
$$\delta([x,y,z])=[\delta(x),y,z]
+[x,\delta(y),z]+[x,y,\delta(z)]$$ for all $x,y,z\in A.$

Ternary structures and their generalization the so-called $n$-ary
structures, raise certain hops in view of their applications in
physics  (see [2-4], [6], [17], [24,26,27], [30], [50] and [55]).

The study of stability problems originated from a famous talk
given by S. M. Ulam [53] in 1940:"Under what condition dose there
exists a homomorphism near an approximate homomorphism?" In the
next year 1941, D. H. Hyers [19] was answered affirmatively the
question of Ulam and the result can be formulated as follows: if
$\epsilon>0$ and $f:{E_1}\longrightarrow{E_2}$ is  a map with
$E_1$ a normed space, $E_2$ a Banach spaces such that
$$\|f(x+y)-f(x)-f(y)\|\leq \epsilon $$
for all $x,y\in E_1,$  then there exists a unique additive map
$T:{E_1}\longrightarrow{E_2}$ such that
$$\|f(x)-T(x)\|\leq \epsilon$$
for all $x\in E_1.$ Moreover, if $f(tx)$ is continuous in $t\in
\Bbb R$ for each fixed $x\in E_1,$ then $T$ is linear. This
stability phenomenon is called the Hyers-Ulam stability of the
additive functional equation $g(x+y)=g(x)+g(y).$ A generalized
version of the theorem of Hyers for approximately additive maps
was given by Th. M. Rassias [46] in 1978 by considering the case
when the above inequality is not bounded:
\begin{thm}\label{t1} Let $f:{E_1}\longrightarrow{E_2}$ be a mapping from
a normed vector space ${E_1}$ into a Banach space ${E_2}$ subject
to the inequality
$$\|f(x+y)-f(x)-f(y)\|\leq \epsilon (\|x\|^p+\|y\|^p) $$
for all $x,y\in E_1,$ where $\epsilon$ and p are constants with
$\epsilon>0$ and $p<1.$ Then there exists a unique additive
mapping $T:{E_1}\longrightarrow{E_2}$ such that
$$\|f(x)-T(x)\|\leq \frac{2\epsilon}{2-2^p}\|x\|^p ,$$ for all $x\in E_1.$
\end{thm}
The stability phenomenon that was introduced and proved by  Th. M.
Rassias is called Hyers-Ulam-Rassias stability. And then the
stability problems of several functional equations have been
extensively investigated by a number of authors and there are many
interesting results concerning this problem. (see [5],
[7],[10],[13-16], [18-22], [28,29], [31-49] and [51]).

Throughout this paper,  we assume that $A$ is a $C^*$-ternary
algebra with norm $\|.\|_A$ and that $B$ is a $C^*$-ternary
algebra with norm $\|.\|_B.$\\

\vskip 5mm
\begin{center}
\section{Superstability  of Homomorphisms and derivations on $C^*$-ternary algebras }
\end{center}
In this section, first we investigate homomorphisms between
$C^*$-ternary algebras. We need the following Lemma in the  main results of the paper. \\
\begin{lem}\label{t2}
Let $f:A \to B$ be a mapping such that
$$\|f(\frac{x_2-x_1}{3})+f(\frac{x_1-3x_3}{3})+f(\frac{3x_1+3x_3-x_2}{3})\|_B \leq \|f(x_1)\|_B, \eqno(2.1)$$
for all $x_1,x_2,x_3 \in A.$ Then $f$ is additive.
\end{lem}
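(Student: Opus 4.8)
The plan is to exploit the fact that the three arguments on the left-hand side of (2.1) always add up to $x_1$: a direct computation gives
$$\frac{x_2-x_1}{3}+\frac{x_1-3x_3}{3}+\frac{3x_1+3x_3-x_2}{3}=x_1.$$
First I would extract $f(0)=0$ by substituting $x_1=x_2=x_3=0$ into (2.1); then every argument equals $0$ and the inequality reads $\|3f(0)\|_B\le\|f(0)\|_B$, which forces $\|f(0)\|_B=0$.

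Next, and this is the crux of the argument, I would set $x_1=0$ while keeping $x_2,x_3$ arbitrary. The right-hand side of (2.1) then becomes $\|f(0)\|_B=0$, so the inequality collapses into the exact identity
$$f\!\left(\tfrac{x_2}{3}\right)+f(-x_3)+f\!\left(x_3-\tfrac{x_2}{3}\right)=0.$$
Writing $u=\frac{x_2}{3}$ and $v=x_3-\frac{x_2}{3}$ (which range freely over $A$ as $x_2,x_3$ do, with $-x_3=-(u+v)$), this says
$$f(u)+f(v)+f(-(u+v))=0 \qquad (u,v\in A).$$

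From here additivity is immediate. Putting $v=0$ and using $f(0)=0$ yields $f(-u)=-f(u)$, so $f$ is odd; substituting $f(-(u+v))=-f(u+v)$ back into the displayed identity gives $f(u+v)=f(u)+f(v)$ for all $u,v\in A$, which is the desired additivity.

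The only real obstacle is recognizing the correct specialization: the estimate (2.1) looks genuinely one-sided, but once one notices that its three arguments sum to $x_1$ and chooses $x_1=0$, the bounding term $\|f(x_1)\|_B$ vanishes and the inequality is automatically promoted to an equality. Everything after that is the standard passage from a Cauchy-type relation to additivity, and requires no estimates whatsoever.
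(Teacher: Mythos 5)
Your proof is correct and follows essentially the same route as the paper: obtain $f(0)=0$ from the all-zero substitution, set $x_1=0$ to turn the inequality into the exact relation $f(x_2/3)+f(-x_3)+f(x_3-x_2/3)=0$, extract oddness, and reparametrize to get the Cauchy equation. The only (harmless) difference is that the paper derives oddness separately by setting $x_1=x_2=0$ first and also records the unneeded identities $f(2x)=2f(x)$ and $f(3x)=3f(x)$, whereas you fold oddness into the $x_1=0$ identity directly.
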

\begin{proof}
Letting $x_1=x_2=x_3=0$ in $(2.1),$ we get $$\|3f(0)\|_B \leq
\|f(0)\|_B.$$So $f(0)=0.$ Letting $x_1=x_2=0$ in $(2.1),$ we get
$$\|f(-x_3)+f(x_3)\|_B \leq \|f(0)\|_B=0$$ for all $x_3 \in A.$
Hence $f(-x_3)=-f(x_3)$ for all $x_3 \in A.$ Letting $x_1=0$ and
$x_2=6x_3$ in $(2.1),$ we get $$\|f(2x_3)-2f(x_3)\|_B \leq
\|f(0)\|_B=0$$ for all $x_3 \in A.$ Hence $$f(2x_3)=2f(x_3)$$ for
all $x_3 \in A.$ Letting $x_1=0$ and $x_2=9x_3$ in $(2.1),$ we get
$$\|f(3x_3)-f(x_3)-2f(x_3)\|_B \leq \|f(0)\|_B=0$$ for all $x_3 \in
A.$Hence $$f(3x_3)=3f(x_3)$$ for all $x_3 \in A.$Letting $x_1=0$
in $(2.1),$ we get
$$\|f(\frac{x_2}{3})+f(-x_3)+f(x_3-\frac{x_2}{3})\|_B \leq
\|f(0)\|_B=0$$ for all $x_2,x_3 \in A.$ So
$$f(\frac{x_2}{3})+f(-x_3)+f(x_3-\frac{x_2}{3})=0\eqno(2.2)$$ for all $x_2,x_3 \in
A.$ Let $t_1=x_3-\frac{x_2}{3}$ and $t_2=\frac{x_2}{3}$ in
$(2.2).$ Then $$f(t_2)-f(t_1+t_2)+f(t_1)=0$$ for all $t_1,t_2 \in
A$ and so $f$ is additive.
\end{proof}

\begin{thm}\label{t2}
Let $p\neq 1$ and $\theta$ be nonnegative real numbers, and let
$f:A \to B$ be a mapping such that
$$\|f(\frac{x_2-x_1}{3})+f(\frac{x_1-3\mu x_3}{3})+\mu f(\frac{3x_1+3x_3-x_2}{3})\|_B \leq \|f(x_1)\|_B,\eqno(2.3)$$
$$\|f([x_1,x_2,x_3])-[f(x_1),f(x_2),f(x_3)]\|_B \leq \theta (\|x_1\|_A^{3p}+\|x_2\|_A^{3p}+\|x_3\|_A^{3p})\eqno(2.4)$$
for all $\mu \in \Bbb T^1:=\{\lambda \in \Bbb C~; |\lambda|=1\}$
and all $x_1,x_2,x_3 \in A.$ Then the mapping $f:A \to B$ is a
$C^*$-ternary algebra homomorphism.
\end{thm}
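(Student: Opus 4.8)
The plan is to read off additivity and full $\mathbb{C}$-linearity of $f$ from the functional inequality (2.3), and then to convert the growth estimate (2.4) into exact multiplicativity for the ternary product by a rescaling argument that uses $p\neq 1$ in an essential way. The backbone is thus: (i) additivity, (ii) $\mathbb{T}^1$-homogeneity, (iii) promotion to $\mathbb{C}$-linearity, (iv) multiplicativity.

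First I would take $\mu=1$ in (2.3): this reduces (2.3) to exactly the hypothesis (2.1) of the preceding Lemma, so $f$ is additive. In particular $f(0)=0$, $f(-x)=-f(x)$, and $f$ is $\mathbb{Q}$-linear. Next, to extract homogeneity under the circle group, I would substitute $x_1=x_2=0$ into (2.3): the first argument becomes $f(0)=0$, and since the right-hand side is $\|f(0)\|_B=0$ the inequality collapses to the identity $f(-\mu x_3)+\mu f(x_3)=0$. Combining this with oddness gives $f(\mu x_3)=\mu f(x_3)$ for every $\mu\in\mathbb{T}^1$ and every $x_3\in A$. This step is clean precisely because the bound in (2.3) is $\|f(x_1)\|_B$, which vanishes at $x_1=0$.

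The genuinely non-mechanical step, and the one I expect to be the main obstacle, is upgrading $\mathbb{T}^1$-homogeneity to honest $\mathbb{C}$-linearity without any continuity hypothesis. Here the device I would use is the identity $e^{i\theta}+e^{-i\theta}=2\cos\theta$: additivity together with $\mathbb{T}^1$-homogeneity gives $f\big((e^{i\theta}+e^{-i\theta})x\big)=(e^{i\theta}+e^{-i\theta})f(x)$, that is $f(2\cos\theta\,x)=2\cos\theta\,f(x)$. Letting $\theta$ range over $\mathbb{R}$ yields $f(rx)=rf(x)$ for all $r\in[-2,2]$, and an integer-homogeneity bootstrap (write $f(rx)=n\,f((r/n)x)$ with $n$ chosen so that $r/n\in[-2,2]$) extends this to all real $r$. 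Finally, writing an arbitrary $\lambda\in\mathbb{C}$ in polar form $\lambda=r\mu$ with $r\ge 0$ and $\mu\in\mathbb{T}^1$ gives $f(\lambda x)=r\,f(\mu x)=r\mu f(x)=\lambda f(x)$, so $f$ is $\mathbb{C}$-linear.

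For the last step I would feed scaled inputs into (2.4). Replacing each $x_i$ by $nx_i$ and using that the bracket is $\mathbb{C}$-linear in the outer variables and conjugate-linear in the middle one, with $n$ real, gives $[nx_1,nx_2,nx_3]=n^3[x_1,x_2,x_3]$ and $[f(nx_1),f(nx_2),f(nx_3)]=n^3[f(x_1),f(x_2),f(x_3)]$, while $\mathbb{C}$-linearity gives $f(n^3[x_1,x_2,x_3])=n^3f([x_1,x_2,x_3])$. Dividing (2.4) by $n^3$ then yields
$$\|f([x_1,x_2,x_3])-[f(x_1),f(x_2),f(x_3)]\|_B\le \theta\,n^{3p-3}\big(\|x_1\|_A^{3p}+\|x_2\|_A^{3p}+\|x_3\|_A^{3p}\big).$$
If $p<1$ the exponent $3p-3$ is negative, so $n\to\infty$ annihilates the right-hand side and forces $f([x_1,x_2,x_3])=[f(x_1),f(x_2),f(x_3)]$; if $p>1$ I would instead substitute $x_i/n$ for $x_i$ and let $n\to\infty$, reaching the same conclusion. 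The role of the hypothesis $p\neq 1$ is exactly to guarantee that one of these two limits is available. Together with the $\mathbb{C}$-linearity established above, this shows $f$ is a $C^*$-ternary algebra homomorphism.
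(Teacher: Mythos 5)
Your proposal is correct and follows essentially the same route as the paper: set $\mu=1$ to invoke the additivity lemma, set $x_1=x_2=0$ to get $f(\mu x)=\mu f(x)$ for $\mu\in\mathbb{T}^1$, upgrade to $\mathbb{C}$-linearity, and kill the right-hand side of $(2.4)$ by rescaling the arguments (up for $p<1$, down for $p>1$). The only divergence is that where the paper cites Theorem 2.1 of [33] for the passage from $\mathbb{T}^1$-homogeneity to $\mathbb{C}$-linearity, you supply a correct self-contained argument via $e^{i\theta}+e^{-i\theta}=2\cos\theta$ and a scaling bootstrap.
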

\begin{proof}
Assume $p > 1.$\\
Let $\mu=1$ in $(2.3).$ By lemma 2.1, the mapping $f:A \to B$ is
additive. Letting $x_1=x_2=0$ in $(2.3),$ we get $$\|f(-\mu
x_3)+\mu f(x_3)\|_B \leq \|f(0)\|_B=0$$ for all $x_3\in A$ and
$\mu \in \Bbb T^1.$ So $$-f(\mu x_3)+\mu f(x_3)=f(-\mu x_3)+\mu
f(x_3)=0$$ for all $x_3\in A$ and all $\mu \in \Bbb T^1.$ Hence
$f(\mu x_3)=\mu f(x_3)$ for all $x_3 \in A$ and all $\mu \in \Bbb
T^1.$ By the theorem 2.1 of [33], the mapping $f:A \to B$ is $\Bbb
C$-linear. It follows from $(2.4)$ that
\begin{align*}
&\|f([x_1,x_2,x_3])-[f(x_1),f(x_2),f(x_3)]\|_B\\
&=\lim_{n \to \infty} 8^n
\|f(\frac{[x_1,x_2,x_3]}{2^n.2^n.2^n})-[f(\frac{x_1}{2^n}),f(\frac{x_2}{2^n}),f(\frac{x_3}{2^n})]\|_B\\
&\leq \lim_{n \to \infty}\frac{8^n
\theta}{8^{np}}(\|x_1\|_A^{3p}+\|x_2\|_A^{3p}+\|x_3\|_A^{3p})\\
&=0
\end{align*}
for all $x_1,x_2,x_3 \in A.$ Thus
$$f([x_1,x_2,x_3])=[f(x_1),f(x_2),f(x_3)]$$ for all $x_1,x_2,x_3 \in
A.$ Hence the mapping $f:A \to B$ is a $C^*$-ternary algebra
homomorphism. Similarly, one obtains the result for the case
$p<1.$

\end{proof}

Now we establish the superstability of derivations from a
$C^*$-ternary algebra into  its $C^*$-ternary modules as follows.
\begin{thm}
Let $p\neq 1$ and $\theta$ be nonnegative real numbers, and let
$f:A \to A$ be a mapping satisfying $(2.3)$ such that
\begin{align*}
&\|f([x_1,x_2,x_3])-[f(x_1),x_2,x_3]-[x_1,f(x_2),x_3]-[x_1,x_2,f(x_3)]\|_A\\
&\leq \theta
(\|x_1\|_A^{3p}+\|x_2\|_A^{3p}+\|x_3\|_A^{3p})\hspace{7 cm} (2.5)
\end{align*}
for all $x_1,x_2,x_3 \in A.$ Then the mapping $f:A \to A$ is a
$C^*$-ternary derivation.
\end{thm}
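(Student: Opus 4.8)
The plan is to transcribe the linearity part of the proof of Theorem~2.3 essentially verbatim, and then run a rescaling argument adapted to the Leibniz-type identity for the final step. I would first assume $p>1$. Setting $\mu=1$ in $(2.3)$ reduces the hypothesis to inequality $(2.1)$, so Lemma~2.1 shows that $f$ is additive. Next, putting $x_1=x_2=0$ in $(2.3)$ yields $\|f(-\mu x_3)+\mu f(x_3)\|_A\le\|f(0)\|_A=0$, and combined with the oddness of $f$ coming from Lemma~2.1 this forces $f(\mu x_3)=\mu f(x_3)$ for every $x_3\in A$ and every $\mu\in\mathbb{T}^1$. Invoking Theorem~2.1 of [33] then upgrades additivity to $\mathbb{C}$-linearity of $f$, exactly as in the homomorphism case.

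The heart of the argument is a rescaling identity. Since the ternary product is linear in the outer variables and conjugate-linear in the middle one, and since the scalar $2^{-n}$ is real, one has $[x_1/2^n,x_2/2^n,x_3/2^n]=[x_1,x_2,x_3]/8^n$, while $\mathbb{C}$-linearity of $f$ gives $f(x_i/2^n)=f(x_i)/2^n$. Substituting the arguments $x_i/2^n$ into $(2.5)$ and factoring $1/8^n$ out of each of the four terms shows that $8^n$ times the left-hand side of $(2.5)$ evaluated at $(x_1/2^n,x_2/2^n,x_3/2^n)$ equals exactly $f([x_1,x_2,x_3])-[f(x_1),x_2,x_3]-[x_1,f(x_2),x_3]-[x_1,x_2,f(x_3)]$, whereas the right-hand side of $(2.5)$ at the scaled arguments becomes $\theta\,8^{-np}(\|x_1\|_A^{3p}+\|x_2\|_A^{3p}+\|x_3\|_A^{3p})$.

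Combining these two observations, I would obtain
\begin{align*}
&\|f([x_1,x_2,x_3])-[f(x_1),x_2,x_3]-[x_1,f(x_2),x_3]-[x_1,x_2,f(x_3)]\|_A\\
&\le \lim_{n\to\infty}\frac{8^n\theta}{8^{np}}\bigl(\|x_1\|_A^{3p}+\|x_2\|_A^{3p}+\|x_3\|_A^{3p}\bigr)=0,
\end{align*}
since $8^{n(1-p)}\to 0$ when $p>1$. Hence the Leibniz identity holds for all $x_1,x_2,x_3\in A$, and together with $\mathbb{C}$-linearity this shows that $f$ is a $C^*$-ternary derivation. For the case $p<1$ I would instead feed the arguments $2^n x_i$ into $(2.5)$ and multiply by $8^{-n}$; the error term then carries the factor $8^{n(p-1)}$, which again tends to $0$.

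The step I expect to need the most care is the bookkeeping in the rescaling identity: one must verify that each of the three bracket terms, including the one with $f$ sitting in the conjugate-linear middle slot, really does scale by the common factor $8^{-n}$, so that the single prefactor $8^n$ simultaneously clears all four terms. Because the scaling factor $2^{-n}$ is real, the conjugation has no effect and this is routine, but it is precisely the point where a sign or exponent slip would most easily occur. Everything else is a direct adaptation of the proof of Theorem~2.3.
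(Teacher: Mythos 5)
Your proposal is correct and follows essentially the same route as the paper: the paper likewise obtains $\mathbb{C}$-linearity of $f$ by the argument of Theorem~2.2 (Lemma~2.1 plus the substitution $x_1=x_2=0$ in $(2.3)$ and Theorem~2.1 of [33]), and then proves the Leibniz identity by evaluating $(2.5)$ at $x_i/2^n$, multiplying by $8^n$, and letting $n\to\infty$. Your extra care about the conjugate-linear middle slot and your explicit treatment of the case $p<1$ only make explicit what the paper leaves implicit.
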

\begin{proof}
Assume $p > 1.$\\
By the theorem 2.2, the mapping $f:A \to A$ is $\Bbb C$-linear. It
follows from $(2.5)$ that
\begin{align*}
&\|f([x_1,x_2,x_3])-[f(x_1),x_2,x_3]-[x_1,f(x_2),x_3]-[x_1,x_2,f(x_3)]\|_A\\
&=\lim_{n \to
\infty}8^n\|f(\frac{[x_1,x_2,x_3]}{8^n})-[f(\frac{x_1}{2^n}),\frac{x_2}{2^n},\frac{x_3}{2^n}]-[\frac{x_1}{2^n},f(\frac{x_2}{2^n}),\frac{x_3}{2^n}]\\
&-[\frac{x_1}{2^n},\frac{x_2}{2^n},f(\frac{x_3}{2^n})]\|_A\\
&\leq \lim_{n \to \infty}\frac{8^n
\theta}{8^{np}}(\|x_1\|_A^{3p}+\|x_2\|_A^{3p}+\|x_3\|_A^{3p})\\
&=0
\end{align*}
for all $x_1,x_2,x_3 \in A.$ So
$$f([x_1,x_2,x_3])=[f(x_1),x_2,x_3]+[x_1,f(x_2),x_3]+[x_1,x_2,f(x_3)]$$
for all $x_1,x_2,x_3 \in A.$ Thus the mapping $f:A \to A$ is a
$C^*$-ternary derivation. Similarly, one obtains the result for
the case $p<1.$
\end{proof}

\vskip 5mm
\begin{center}
\section{Stability of homomorphisms and derivations on $C^*$-ternary algebras}
\end{center}
First we prove the generalized Hyers-Ulam -Rassias stability of
homomorphisms in $C^*$-ternary algebras.
\begin{thm}
Let $p>1$ and $\theta$ be nonnegative real numbers, and let $f:A
\to B$ be a mapping such that
\begin{align*}
&\|f(\frac{x_2-x_1}{3})+f(\frac{x_1-3\mu x_3}{3})+\mu
f(\frac{3x_1+3x_3-x_2}{3})-f(x_1)\|_B \\
&\leq \theta(\|x_1\|_A^p+\|x_2\|_A^p+\|x_3\|_A^p)\hspace{7
cm}(3.1)
\end{align*}
and
$$\|f([x_1,x_2,x_3])-[f(x_1),f(x_2),f(x_3)]\|_B \leq \theta (\|x_1\|_A^{3p}+\|x_2\|_A^{3p}+\|x_3\|_A^{3p})\eqno(3.2)$$
for all $\mu \in \Bbb T^1$ and all $x_1,x_2,x_3 \in A.$ Then there
exists a unique $C^*$-ternary homomorphism $H:A \to B$ such that
$$\|H(x_1)-f(x_1)\|_B \leq \frac{\theta(1+2^p)\|x_1\|_A^p}{1-3^{1-p}}\eqno(3.3)$$
for all $x_1 \in A.$
\end{thm}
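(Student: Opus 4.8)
The plan is to follow the classical Hyers-Ulam-Rassias direct method, adapted to the functional equation in $(3.1)$. First I would extract an additive-type Cauchy difference estimate from $(3.1)$ by specializing the variables. Setting $\mu=1$ and choosing $x_1=0$, $x_2=9x_3$ (mirroring the substitutions in Lemma \ref{t2} that forced $f(3x)=3f(x)$), the equation $(3.1)$ should collapse to a bound of the form
\begin{equation*}
\|f(3x)-3f(x)\|_B \leq c\,\theta\,\|x\|_A^p
\end{equation*}
for all $x\in A$, where $c$ is an explicit constant producing the factor $(1+2^p)$ appearing in $(3.3)$. This is the seed inequality that drives the whole construction.

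Next I would define the candidate homomorphism as the limit $H(x):=\lim_{n\to\infty} 3^n f(x/3^n)$. Since $p>1$, dividing the argument by $3^n$ and multiplying by $3^n$ produces a contraction: replacing $x$ by $x/3^{n+1}$ in the seed inequality and multiplying by $3^n$ gives
\begin{equation*}
\|3^{n+1} f(x/3^{n+1}) - 3^n f(x/3^n)\|_B \leq \frac{c\,\theta}{3^{(n+1)p}}\,3^n\,\|x\|_A^p,
\end{equation*}
and summing the geometric series in $3^{1-p}<1$ shows $(3^n f(x/3^n))_n$ is Cauchy in the Banach space $B$, hence convergent, and simultaneously yields the explicit bound $(3.3)$ with denominator $1-3^{1-p}$. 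Passing to the limit in $(3.1)$ shows $H$ satisfies the exact equation $H(\frac{x_2-x_1}{3})+H(\frac{x_1-3\mu x_3}{3})+\mu H(\frac{3x_1+3x_3-x_2}{3})=H(x_1)$, so by Lemma \ref{t2} (with $\mu=1$) $H$ is additive, and the $\mu$-homogeneity argument from Theorem \ref{t2} (letting $x_1=x_2=0$ to get $H(\mu x)=\mu H(x)$ for $\mu\in\mathbb{T}^1$, then invoking the cited Theorem 2.1 of [33]) upgrades this to $\mathbb{C}$-linearity.

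For the multiplicative/homomorphism property I would pass to the limit in $(3.2)$ exactly as in Theorem \ref{t2}: using $\mathbb{C}$-linearity to write $[f(x_1),f(x_2),f(x_3)]$ scaled by $8^n$ and the submultiplicativity-driven estimate $\|x_i\|^{3p}$, the right-hand side carries a factor $8^n/8^{np}=8^{n(1-p)}\to 0$, forcing $H([x_1,x_2,x_3])=[H(x_1),H(x_2),H(x_3)]$. Uniqueness follows the standard route: if $H'$ is another such homomorphism satisfying $(3.3)$, then both agree with the scaling limit, and for each fixed $x$ one estimates $\|H(x)-H'(x)\|_B = 3^n\|H(x/3^n)-H'(x/3^n)\|_B \leq 3^n(\|H(x/3^n)-f(x/3^n)\|_B + \|f(x/3^n)-H'(x/3^n)\|_B)$, which is bounded by a constant times $3^{n(1-p)}\|x\|_A^p\to 0$. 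The main obstacle I anticipate is purely bookkeeping rather than conceptual: correctly deriving the seed inequality so that the constant matches $(1+2^p)$, since $(3.1)$ mixes three different arguments and one must choose the substitution that isolates $f(3x)-3f(x)$ cleanly while tracking how the three norm terms $\|x_i\|^p$ combine; the convergence, linearity, and homomorphism steps are then routine applications of the geometric-series contraction and the limiting arguments already used in Section 2.
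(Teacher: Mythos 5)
Your overall architecture (direct method, $H(x):=\lim_n 3^nf(x/3^n)$, geometric series in $3^{1-p}$, passing to the limit in $(3.1)$ and $(3.2)$, standard uniqueness) coincides with the paper's proof. But the one step you flag as ``purely bookkeeping'' is in fact where your argument breaks: the substitution $x_1=0$, $x_2=9x_3$ does \emph{not} collapse $(3.1)$ to $\|f(3x)-3f(x)\|_B\leq c\,\theta\|x\|_A^p$. It gives
$$\|f(3x_3)+f(-x_3)+f(-2x_3)-f(0)\|_B\leq \theta\,(9^p+1)\|x_3\|_A^p,$$
and to turn the left side into $f(3x_3)-3f(x_3)$ you would need oddness, $f(2x)=2f(x)$, and $f(0)=0$ --- facts that Lemma 2.1 obtains only because its right-hand side is exactly $\|f(0)\|_B=0$. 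In the stability setting these hold only approximately, each with its own error term from further substitutions into $(3.1)$, so the errors accumulate and the resulting constant certainly is not $1+2^p$; note also that $9^p$, not $2^p$, appears on the right. The paper's substitution is $\mu=1$, $x_2=2x_1$, $x_3=0$, which makes all three arguments of $f$ equal to $x_1/3$ simultaneously and yields the seed inequality $\|3f(x_1/3)-f(x_1)\|_B\leq\theta(1+2^p)\|x_1\|_A^p$ in one stroke, with the constant of $(3.3)$ built in. With that substitution in place of yours, the remainder of your proposal goes through as written.

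One minor further remark: for the multiplicativity step you import the $8^n/2^n$ scaling from Theorem 2.2, whereas $H$ here is defined through $3^n f(x/3^n)$; the paper accordingly uses $27^n$ and $3^n$. Your version can be repaired (using $(3.3)$ to compare $2^nH(x/2^n)$ with $2^nf(x/2^n)$), but the $3$-adic scaling is the one that connects directly to the definition of $H$.
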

\begin{proof}
Let us assume $\mu=1$ ,$x_2=2x_1$ and $x_3=0$ in $(3.1).$ Then we
get $$\|3f(\frac{x_1}{3})-f(x_1)\|_B \leq
\theta(1+2^p)\|x_1\|_A^p\eqno(3.4)$$ for all $x_1 \in A.$ So by
induction, we have
$$\|3^nf(\frac{x_1}{3^n})-f(x_1)\|_B \leq \theta(1+2^p)\|x_1\|_A^p \sum_{i=0}^{n-1}3^{i(1-p)}\eqno(3.5)$$
for all $x_1 \in A.$ Hence
\begin{align*}
\|3^{n+m}f(\frac{x_1}{3^{n+m}})-3^mf(\frac{x_1}{3^m})\|_B
&\leq \theta (1+2^p)\|x_1\|_A^p \sum_{i=0}^{n-1}3^{(i+m)(1-p)}\\
&\leq \theta (1+2^p)
\|x_1\|_A^p\sum_{i=m}^{n+m-1}3^{i(1-p)}\hspace{1.7 cm} (3.6)
\end{align*}
for all nonnegative integers $m$ and $n$ with $n \geq m$ and all
$x_1 \in A.$ From this it follows that the sequence $\{ 3^n
f(\frac{x_1}{3^n})\}$ is a Cauchy sequence for all $x_1 \in A.$
Since $B$ is complete, the sequence $\{3^n f(\frac{x_1}{3^n})\}$
converges. Thus one can define the mapping $H:A \to B$ by
$$H(x_1):=\lim_{n \to \infty} 3^n f(\frac{x_1}{3^n})$$ for all
$x_1 \in A.$ Moreover, letting $m=0$ and passing the limit $n \to
\infty$ in $(3.6),$ we get $(3.3).$ It follows from $(3.1)$ that
\begin{align*}
&\|H(\frac{x_2-x_1}{3})+H(\frac{x_1-3\mu x_3}{3})+\mu
H(\frac{3x_1+3x_3-x_2}{3})-H(x_1)\|_B\\
&=\lim_{n \to \infty}
3^n\|f(\frac{x_2-x_1}{3^{n+1}})+f(\frac{x_1-3\mu
x_3}{3^{n+1}})+f(\frac{3x_1+3x_3-x_2}{3^{n+1}})-f(\frac{x_1}{3^n})\|_B\\
&\leq \lim_{n \to
\infty}\frac{3^n\theta}{3^{np}}(\|x_1\|_A^p+\|x_2\|_A^p+\|x_3\|_A^p)=0
\end{align*}
for all $\mu \in \Bbb T^1$ and all $x_1,x_2,x_3 \in A.$ So
$$H(\frac{x_2-x_1}{3})+H(\frac{x_1-3\mu x_3}{3})+\mu
H(\frac{3x_1+3x_3-x_2}{3})=H(x_1)$$ for all $\mu \in \Bbb T^1$ and
all $x_1,x_2,x_3 \in A.$ By the Theorem 2.1 of [33], the mapping
$H:A \to B$ is $\Bbb C$-linear.\\
Now, let $H^{'}:A \to B$ be another additive mapping satisfying
$(3.3).$ Then we have
\begin{align*}
&\|H(x_1)-H^{'}(x_1)\|_B=3^n
\|H(\frac{x_1}{3^n})-H^{'}(\frac{x_1}{3^n})\|_B\\
&\leq 3^n
(\|H(\frac{x_1}{3^n})-f(\frac{x_1}{3^n})\|_B+\|H^{'}(\frac{x_1}{3^n})-f(\frac{x_1}{3^n})\|_B)\\
&\leq \frac{2.3^n \theta(1+2^p)}{3^{np}(1-3^{1-p})}\|x\|_A^p~,
\end{align*}
which tends to zero as $n \to \infty$ for all $x_1 \in A.$ So we
can conclude that $H(x_1)=H^{'}(x_1)$ for all $x_1 \in A.$ This
proves the uniqueness of $H.$\\ It follows from $(3.2)$ that
\begin{align*}
&\|H([x_1,x_2,x_3])-[H(x_1),H(x_2),H(x_3)]\|_B\\
&=\lim_{n \to \infty} 27^n
\|f(\frac{[x_1,x_2,x_3]}{3^n.3^n.3^n})-[f(\frac{x_1}{3^n}),f(\frac{x_2}{3^n}),f(\frac{x_3}{3^n})]\|_B\\
&\leq \lim_{n \to \infty}\frac{27^n
\theta}{27^{np}}(\|x_1\|_A^{3p}+\|x_2\|_A^{3p}+\|x_3\|_A^{3p})=0
\end{align*}
for all $x_1,x_2,x_3 \in A.$\\
Thus the mapping $H:A \to B$ is a unique $C^*$-ternary
homomorphism satisfying $(3.3).$
\end{proof}

\begin{thm}
Let $p<1$ and $\theta$ be nonnegative real numbers, and let $f:A
\to B$ be a mapping satisfying $(3.1)$ and $(3.2).$ Then there
exists a unique $C^*$-ternary homomorphism $H:A \to B$ such that
$$\|H(x_1)-f(x_1)\|_B \leq \frac{\theta(1+2^p)\|x_1\|_A^p}{3^{1-p}-1}\eqno(3.7)$$
for all $x_1 \in A.$
\end{thm}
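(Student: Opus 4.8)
The plan is to mirror the proof of the previous theorem (Theorem 3.1), but with the sign of the exponent in the geometric series reversed. The crucial difference when $p<1$ is that $3^{1-p}>1$, so the series $\sum 3^{i(1-p)}$ that converged in the $p>1$ case now diverges. This forces us to iterate the functional equation in the opposite direction: instead of defining $H(x_1) := \lim_n 3^n f(x_1/3^n)$, I would set $x_2 = 2x_1$, $x_3 = 0$, $\mu = 1$ in (3.1) and rewrite the resulting estimate (3.4) as a bound on $\|f(3x_1) - 3f(x_1)\|_B$, then define $H$ via the \emph{expanding} sequence
$$H(x_1) := \lim_{n \to \infty} \frac{1}{3^n} f(3^n x_1).$$

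First I would establish the one-step inequality. Replacing $x_1$ by $3x_1$ in (3.4) and dividing through by $3$ gives a bound of the form $\|\tfrac{1}{3}f(3x_1)-f(x_1)\|_B \leq \tfrac{\theta(1+2^p)}{3}\cdot 3^p\|x_1\|_A^p$, and by induction one obtains
$$\Bigl\|\tfrac{1}{3^n} f(3^n x_1) - f(x_1)\Bigr\|_B \leq \theta(1+2^p)\|x_1\|_A^p \sum_{i=1}^{n} 3^{i(p-1)}.$$
Since $p<1$ we have $p-1<0$, so this geometric series converges, and its tail estimate shows that $\{3^{-n} f(3^n x_1)\}$ is Cauchy; completeness of $B$ then yields the limit $H$. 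Taking the limit of the full sum gives exactly the bound (3.7) with denominator $3^{1-p}-1$.

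Next, the $\mathbb{C}$-linearity of $H$ follows verbatim from the Theorem 3.1 argument: I would substitute the definition of $H$ into the left side of (3.1), pull out the factor $3^{-n}$ against the functional equation evaluated at $3^n x_i$, and bound the error by $3^{-n}\theta\cdot 3^{np}(\|x_1\|^p+\|x_2\|^p+\|x_3\|^p)$, which tends to $0$ precisely because $p<1$. Hence $H$ satisfies the defining relation with all $\mu\in\mathbb{T}^1$, and Theorem 2.1 of [33] upgrades additivity plus $\mu$-homogeneity to $\mathbb{C}$-linearity. Uniqueness is the same two-approximant comparison as before, now with the factor $3^{n(p-1)}\to 0$. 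Finally, the multiplicative (ternary-homomorphism) property comes from (3.2): writing $H([x_1,x_2,x_3]) = \lim_n 27^{-n} f([3^n x_1, 3^n x_2, 3^n x_3])$ and using $\mathbb{C}$-trilinearity of the bracket together with the $27^{np}/27^n \to 0$ estimate shows the defect vanishes.

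The only genuine subtlety — and the step I would watch most carefully — is that every convergence argument now hinges on the expanding direction $3^n x_1$ rather than the contracting $x_1/3^n$, so each error term carries the factor $3^{n(p-1)}$ and one must verify that this factor decays; this is exactly where the hypothesis $p<1$ is used, in the same role that $p>1$ played in Theorem 3.1. No new obstacle arises beyond bookkeeping the reversed exponents, since the $C^*$-ternary structure enters only through the fixed trilinear bracket, which commutes with the scaling limits.
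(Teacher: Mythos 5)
Your proposal is correct and is exactly what the paper means by ``the proof is similar to the proof of Theorem 3.1'': for $p<1$ one iterates in the expanding direction, defining $H(x_1)=\lim_{n\to\infty}3^{-n}f(3^nx_1)$, and the convergent series $\sum_{i\ge 1}3^{i(p-1)}=1/(3^{1-p}-1)$ produces precisely the bound $(3.7)$. All the remaining estimates (linearity, uniqueness, the ternary multiplicativity via the factor $27^{n(p-1)}\to 0$) check out as you describe.
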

\begin{proof}
The proof is similar to the proof of Theorem 3.1.
\end{proof}

Now we prove the generalized Hyers-Ulam -Rassias stability of
derivations from a $C^*$-ternary algebra into its $C^*$-ternary
moduls.
\begin{thm}
Let $p>1$ and $\theta$ be nonnegative real numbers, and let $f:A
\to A$ be a mapping such that
\begin{align*}
&\|f(\frac{x_2-x_1}{3})+f(\frac{x_1-3\mu x_3}{3})+\mu
f(\frac{3x_1+3x_3-x_2}{3})-f(x_1)\|_A \\
&\leq \theta(\|x_1\|_A^p+\|x_2\|_A^p+\|x_3\|_A^p)\hspace{7
cm}(3.8)
\end{align*}
and
\begin{align*}
&\|f([x_1,x_2,x_3])-[f(x_1),x_2,x_3]-[x_1,f(x_2),x_3]-[x_1,x_2,f(x_3)]\|_A\\
&\leq \theta
(\|x_1\|_A^{3p}+\|x_2\|_A^{3p}+\|x_3\|_A^{3p})\hspace{7 cm}(3.9)
\end{align*}
for all $\mu \in \Bbb T^1$ and all $x_1,x_2,x_3 \in A.$ Then there
exists a unique $C^*$-ternary derivation $D:A \to A$ such that
$$\|D(x_1)-f(x_1)\|_A \leq \frac{\theta(1+2^p)\|x_1\|_A^p}{1-3^{1-p}}\eqno(3.10)$$
for all $x_1 \in A.$
\end{thm}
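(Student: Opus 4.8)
The plan is to follow the now-familiar two-part template established in Theorems 3.1 and 2.3: first construct a $\mathbb{C}$-linear additive map $D$ via a Cauchy-sequence limit and prove the bound (3.10), then upgrade $D$ to a genuine derivation using the perturbation estimate (3.9). Assuming $p>1$, I would begin exactly as in Theorem 3.1 by specializing (3.8) with $\mu=1$, $x_2=2x_1$, $x_3=0$ to obtain $\|3f(\frac{x_1}{3})-f(x_1)\|_A \leq \theta(1+2^p)\|x_1\|_A^p$, then iterate to get the analogue of (3.5)--(3.6). This shows $\{3^n f(\frac{x_1}{3^n})\}$ is Cauchy; since $A$ is complete (it is a $C^*$-ternary algebra, hence a Banach space), the limit $D(x_1):=\lim_{n\to\infty}3^n f(\frac{x_1}{3^n})$ exists, and letting $m=0$, $n\to\infty$ in the telescoped estimate yields (3.10). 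The $\mathbb{C}$-linearity of $D$ follows by passing (3.8) to the limit, obtaining the functional equation of Lemma 2.1 with the $\mu$-twist, and then invoking Theorem 2.1 of [33] precisely as in Theorem 3.1.

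Next I would establish uniqueness by the same argument used in Theorem 3.1: if $D'$ is another additive map satisfying (3.10), then $\|D(x_1)-D'(x_1)\|_A = 3^n\|D(\frac{x_1}{3^n})-D'(\frac{x_1}{3^n})\|_A$ is bounded by $\frac{2\cdot 3^n\theta(1+2^p)}{3^{np}(1-3^{1-p})}\|x_1\|_A^p$, which tends to $0$ since $p>1$ forces $3^{n(1-p)}\to 0$. Hence $D=D'$.

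The substantive step, and the one that departs from Theorem 3.1, is verifying the derivation identity. Here I would start from (3.9) and exploit the homogeneity of the ternary product together with $\mathbb{C}$-linearity of $D$. The key computation rewrites the derivation defect at scaled arguments: because $[\cdot,\cdot,\cdot]$ scales as $8^n$ under division of each slot by $2^n$, but the construction of $D$ uses the factor $3^n$ with argument $\frac{x_1}{3^n}$, one must be careful to use the scaling factor that matches the ternary product. Following the pattern of Theorem 2.3, I expect the clean route is to write
\begin{align*}
&\|D([x_1,x_2,x_3])-[D(x_1),x_2,x_3]-[x_1,D(x_2),x_3]-[x_1,x_2,D(x_3)]\|_A\\
&=\lim_{n\to\infty}27^n\Big\|f\Big(\frac{[x_1,x_2,x_3]}{27^n}\Big)-\Big[f\Big(\frac{x_1}{3^n}\Big),\frac{x_2}{3^n},\frac{x_3}{3^n}\Big]\\
&\quad-\Big[\frac{x_1}{3^n},f\Big(\frac{x_2}{3^n}\Big),\frac{x_3}{3^n}\Big]-\Big[\frac{x_1}{3^n},\frac{x_2}{3^n},f\Big(\frac{x_3}{3^n}\Big)\Big]\Big\|_A\\
&\leq\lim_{n\to\infty}\frac{27^n\theta}{27^{np}}\big(\|x_1\|_A^{3p}+\|x_2\|_A^{3p}+\|x_3\|_A^{3p}\big)=0,
\end{align*}
where the final limit vanishes because $p>1$ gives $27^{n(1-p)}\to 0$. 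This establishes $D([x_1,x_2,x_3])=[D(x_1),x_2,x_3]+[x_1,D(x_2),x_3]+[x_1,x_2,D(x_3)]$, so $D$ is a $C^*$-ternary derivation.

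The main obstacle I anticipate is purely bookkeeping: justifying the first equality in the display above. One must check that each Leibniz term $[\frac{x_1}{3^n},\frac{x_2}{3^n},D(x_3)]$ type expression genuinely converges to the claimed limit, using joint continuity of the ternary product (guaranteed by the contractive bound $\|[x,y,z]\|\leq\|x\|\|y\|\|z\|$) together with the already-proven convergence $3^n f(\frac{x_i}{3^n})\to D(x_i)$ and the fact that $\frac{x_i}{3^n}\to 0$ while the compensating $3^n$ factors redistribute across the three slots. Once this continuity argument is in place the estimate is routine, and the case $p<1$ follows by the same modification used to pass from Theorem 3.1 to Theorem 3.2, replacing $1-3^{1-p}$ by $3^{1-p}-1$ in the bound (3.10).
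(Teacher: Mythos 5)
Your proposal is correct and follows essentially the same route as the paper: construct $D(x_1)=\lim_{n\to\infty}3^nf(\frac{x_1}{3^n})$ and establish linearity, the bound (3.10), and uniqueness exactly as in Theorem 3.1, then verify the derivation identity by the $27^n$-scaled limit of the defect in (3.9). Your version is in fact slightly more careful than the paper's, which omits the $f$ applied to $\frac{[x_1,x_2,x_3]}{27^n}$ in its display (an evident typo) and does not spell out the continuity of the ternary product that justifies passing to the limit in the Leibniz terms.
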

\begin{proof}
By the same reasoning as in the proof of the Theorem 3.1, there
exists a unique $\Bbb C$-linear mapping $D:A \to A$ satisfying
$(3.10).$ The mapping $D:A \to A$ is defined by
$$D(x_1):=\lim_{n \to \infty}3^n f(\frac{x_1}{3^n})$$ for all $x_1 \in
A.$ It follows from $(3.9)$ that
\begin{align*}
&\|D([x_1,x_2,x_3])-[D(x_1),x_2,x_3]-[x_1,D(x_2),x_3]-[x_1,x_2,D(x_3)]\|_A\\
&=\lim_{n \to
\infty}27^n\|\frac{[x_1,x_2,x_3]}{3^n.3^n.3^n}-[f(\frac{x_1}{3^n}),\frac{x_2}{3^n},\frac{x_3}{3^n}]-[\frac{x_1}{3^n},f(\frac{x_2}{3^n}),\frac{x_3}{3^n}]
-[\frac{x_1}{3^n},\frac{x_2}{3^n},f(\frac{x_3}{3^n})]\|_A\\
&\leq \lim_{n \to
\infty}\frac{27^n\theta}{27^{np}}(\|x_1\|_A^{3p}+\|x_2\|_A^{3p}+\|x_3\|_A^{3p})=0
\end{align*}
for all $x_1,x_2,x_3 \in A.$ So
$$D([x_1,x_2,x_3])=[D(x_1),x_2,x_3]+[x_1,D(x_2),x_3]+[x_1,x_2,D(x_3)]$$
for all $x_1,x_2,x_3 \in A.$\\
Thus the mapping $D:A \to A$ is a unique $C^*$-ternary derivation
satisfying $(3.10).$
\end{proof}

\begin{thm}
Let $p<1$ and $\theta$ be nonnegative real numbers, and let $f:A
\to A$ be a mapping satisfying $(3.8)$ and $(3.9).$ Then there
exists a unique $C^*$-ternary derivation $D:A \to A$ such that
$$\|D(x_1)-f(x_1)\|_A \leq \frac{\theta(1+2^p)\|x_1\|_A^p}{3^{1-p}-1}\eqno(3.11)$$
for all $x_1 \in A.$
\end{thm}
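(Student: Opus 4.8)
The plan is to mirror the construction in Theorem 3.3, but to run the iteration in the opposite direction because now $p<1$. Instead of contracting the argument I will \emph{expand} it and set
$D(x_1):=\lim_{n\to\infty}3^{-n}f(3^nx_1)$ for all $x_1\in A$. Everything from the $\Bbb C$-linearity down to the uniqueness then runs exactly as in Theorems 3.1 and 3.3, and only the convergence estimate and the constant in $(3.11)$ change sign.

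First I would specialize $(3.8)$ by taking $\mu=1$, $x_2=2x_1$ and $x_3=0$; as in the derivation of $(3.4)$ this yields $\|3f(\tfrac{x_1}{3})-f(x_1)\|_A\le\theta(1+2^p)\|x_1\|_A^p$. Replacing $x_1$ by $3x_1$ and dividing by $3$ gives $\|f(x_1)-\tfrac{1}{3}f(3x_1)\|_A\le\theta(1+2^p)3^{p-1}\|x_1\|_A^p$. Telescoping the differences $3^{-(k+1)}f(3^{k+1}x_1)-3^{-k}f(3^kx_1)$ and summing the geometric series $\sum_{k\ge0}3^{k(p-1)}$, which converges precisely because $p<1$, shows that $\{3^{-n}f(3^nx_1)\}$ is Cauchy; completeness of $A$ lets me define $D$ as above. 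Passing to the limit in the telescoped estimate produces $(3.11)$, where the denominator $3^{1-p}-1$ (rather than $1-3^{1-p}$ as in $(3.10)$) arises from the identity $\tfrac{3^{p-1}}{1-3^{p-1}}=\tfrac{1}{3^{1-p}-1}$.

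The $\Bbb C$-linearity of $D$ I would obtain exactly as in Theorem 3.1: the full equation $(3.8)$ with its $\mu$-twist passes to the limit to give additivity together with $D(\mu x)=\mu D(x)$ for all $\mu\in\Bbb T^1$, whence $\Bbb C$-linearity follows from Theorem 2.1 of [33]. Uniqueness is the standard argument: if $D'$ is another such map, then linearity gives $\|D(x_1)-D'(x_1)\|_A=3^{-n}\|D(3^nx_1)-D'(3^nx_1)\|_A$, which is bounded by $2\cdot3^{-n}$ times the right-hand side of $(3.11)$ evaluated at $3^nx_1$, i.e. by a constant multiple of $3^{n(p-1)}\|x_1\|_A^p\to0$.

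The derivation identity is the substantive step, and the main thing to watch is the bookkeeping of the powers of $3$ through the multilinear product. Replacing each $x_i$ by $3^nx_i$ in $(3.9)$ and using that the ternary product is $\Bbb C$-linear in the outer slots and conjugate $\Bbb C$-linear in the middle slot, the real scalars $3^n$ factor out: $[3^nx_1,3^nx_2,3^nx_3]=27^n[x_1,x_2,x_3]$, while each mixed term such as $[f(3^nx_1),3^nx_2,3^nx_3]$ equals $9^n[f(3^nx_1),x_2,x_3]$. Dividing the whole inequality by $27^n$ therefore turns the first term into $3^{-3n}f(3^{3n}[x_1,x_2,x_3])\to D([x_1,x_2,x_3])$ (a subsequence of the defining limit) and each mixed term into $[3^{-n}f(3^nx_1),x_2,x_3]\to[D(x_1),x_2,x_3]$, where I use that the ternary product is continuous by $\|[x,y,z]\|\le\|x\|\,\|y\|\,\|z\|$; meanwhile the right-hand side carries a factor $3^{3n(p-1)}\to0$. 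This yields $D([x_1,x_2,x_3])=[D(x_1),x_2,x_3]+[x_1,D(x_2),x_3]+[x_1,x_2,D(x_3)]$, so $D$ is a $C^*$-ternary derivation. Getting these exponents right is exactly what forces the correct direction of the iteration and the form of the constant in $(3.11)$.
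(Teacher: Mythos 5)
Your proposal is correct and is exactly the argument the paper intends: its own ``proof'' is only the one-line remark that the case $p<1$ is similar to Theorems 3.1 and 3.3, and the substantive content you supply --- reversing the iteration to $D(x_1)=\lim_n 3^{-n}f(3^nx_1)$, summing $\sum_k 3^{k(p-1)}$ to get the denominator $3^{1-p}-1$, and tracking the factor $27^n=3^n\cdot 3^n\cdot 3^n$ through the ternary product (noting that conjugate linearity in the middle slot is harmless for the real scalar $3^n$) --- is the standard completion of that remark. All exponent bookkeeping, the subsequence observation for $D([x_1,x_2,x_3])$, and the uniqueness estimate check out.
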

\begin{proof}
The proof is similar to the proof of Theorems 3.1 and 3.3.
\end{proof}

{\small

}
\end{document}